
\documentclass[12pt]{amsart}

\usepackage[margin=1.15in]{geometry}
\usepackage{amscd,amssymb, amsmath, wasysym, mathrsfs, mathtools}
\usepackage{graphicx}
\usepackage[all, cmtip]{xy}

\usepackage{url}
\usepackage[pagebackref=true]{hyperref}

\theoremstyle{plain}
\newtheorem{theorem}{Theorem}[section]
\newtheorem{prop}[theorem]{Proposition}

\newtheorem*{claim}{Claim}

\newtheorem{lemma}[theorem]{Lemma}

\theoremstyle{definition}

\newtheorem{rmk}[theorem]{Remark}

\newtheorem*{ex*}{Example}

\def\lam{\lambda}

\newcommand\sO{{\mathcal O}}

\newcommand\sD{\mathcal{D}}

\def\gg{{\mathbb{G}}}
\newcommand\qq{{\mathbb{Q}}}
\newcommand\zz{{\mathbb{Z}}}

\newcommand\cc{{\mathbb{C}}}
\newcommand\aaa{{\mathbb{A}}}

\newcommand\hh{{\mathbb{H}}}

\newcommand\mb{\mathbf{M}_{\textup{B}}}
\newcommand\mdr{\mathbf{M}_{\textup{DR}}}



\DeclareMathOperator{\codim}{codim}              

\DeclareMathOperator{\homo}{Hom}

\DeclareMathOperator{\spec}{Spec}

\title{Cohomology jump loci of quasi-projective varieties}

\begin{document}
\author{Nero Budur}
\email{Nero.Budur@wis.kuleuven.be}
\address{KU Leuven and University of Notre Dame}
\curraddr{KU Leuven, Department of Mathematics,
Celestijnenlaan 200B, B-3001 Leuven, Belgium}

\author{Botong Wang}
\email{bwang3@nd.edu}
\address{University of Notre Dame}
\curraddr{Department of Mathematics,
 255 Hurley Hall, IN 46556, USA}


\keywords{Local system, cohomology jump loci, quasi-projective variety.}
\subjclass[2010]{14F05, 14F45, 55N25.}

\begin{abstract} 
We prove that the cohomology jump loci in the space of rank one local systems over a smooth quasi-projective variety are finite unions of torsion translates of subtori. The main ingredients are a recent result of Dimca-Papadima, some techniques introduced by Simpson, together with properties of the moduli space of logarithmic connections constructed by Nitsure and Simpson.
\end{abstract}

\maketitle

\section{Introduction}
Let $X$ be a connected, finite-type CW-complex. Define $$\mb(X)=\homo(\pi_1(X), \cc^*)$$ to be the variety of $\cc^*$ representations of $\pi_1(X)$. Then $\mb(X)$ is a direct product of $(\cc^*)^{b_1(X)}$ and a finite abelian group. For each point $\rho\in \mb(X)$, there exists a unique rank one local system $L_\rho$, whose monodromy representation is $\rho$. The {\it cohomology jump loci} of $X$ are the natural strata $$\Sigma^i_k(X)=\{\rho\in \mb(X)\; |\; \dim_\cc H^i(X, L_\rho)\geq k\}.$$  $\Sigma^i_k(X)$ is a Zariski closed subset of $\mb(X)$. A celebrated result of Simpson says that if $X$ is a smooth projective variety defined over $\cc$, then $\Sigma^i_k(X)$ is a union of torsion translates of subtori of $\mb(X)$.

In this paper, we generalize Simpson's result to quasi-projective varieties. 
\begin{theorem}\label{torsion}
Suppose $U$ is a smooth quasi-projective variety defined over $\cc$.  Then $\Sigma^i_k(U)$ is a finite union of torsion translates of subtori of $\mb(U)$. 
\end{theorem}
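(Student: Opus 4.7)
The plan is to adapt Simpson's strategy for the projective case to the quasi-projective setting by replacing the moduli of flat bundles with the Nitsure--Simpson moduli space of logarithmic connections on a good compactification. First, choose a smooth projective compactification $j \colon U \hookrightarrow X$ with $D = X \setminus U$ a simple normal crossings divisor, and arrange $(X,D)$ to be defined over $\bar{\qq}$ by spreading out. For each fundamental domain $\tau \subset \cc$ for the exponential $\exp \colon \cc \to \cc^*$, Nitsure and Simpson construct the moduli space $\mdr_\tau(X,D)$ of rank one logarithmic connections $(L,\nabla)$ on $X$ with residues along the components of $D$ lying in $\tau$. The Riemann--Hilbert correspondence gives a complex analytic map $RH_\tau \colon \mdr_\tau(X,D) \to \mb(U)$, and as $\tau$ varies the images cover $\mb(U)$.

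Next, transport the cohomology jump loci to the de Rham side. For $\rho = RH_\tau(L, \nabla)$, Deligne's comparison identifies $H^i(U, L_\rho)$ with the hypercohomology $\hh^i(X, \Omega_X^\bullet(\log D) \otimes L)$, which is algebraic and upper-semicontinuous in $(L,\nabla)$. Hence the de Rham jump loci
\[
\Sigma^i_k(\mdr_\tau) := \{(L,\nabla) \in \mdr_\tau(X,D) : \dim \hh^i(X, \Omega_X^\bullet(\log D) \otimes L) \geq k\}
\]
are Zariski closed subvarieties of $\mdr_\tau(X,D)$, defined over $\bar{\qq}$, and $RH_\tau$ sends them into $\Sigma^i_k(U)$. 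Thus $\Sigma^i_k(U)$ is, analytically, a union of $RH_\tau$-images of algebraic subvarieties defined over a number field.

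The final step is to deduce torsion translates by combining Simpson's arithmetic argument with the Dimca--Papadima local structure theorem. The Galois group $\mathrm{Aut}(\cc/\bar{\qq})$ permutes the irreducible components of $\Sigma^i_k(\mdr_\tau)$, producing an a priori infinite family of components whose $RH_\tau$-images lie in $\Sigma^i_k(U)$. Since $\Sigma^i_k(U)$ is Zariski closed, it has finitely many irreducible components; using the structure statement that each such component is a translate of a subtorus of $\mb(U)$ (which follows in this framework from Dimca--Papadima applied germ-wise), one concludes that each component's translation point has finite Galois orbit over $\bar{\qq}$ and so is torsion.

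The main obstacle, as I see it, is precisely the bridge between the algebraic de Rham picture and the analytic Betti picture: the map $RH_\tau$ is only complex analytic, a single $\tau$ covers only an analytic open subset of $\mb(U)$, and the Galois action on $\mdr_\tau$ does not directly act on $\mb(U)$. The role of the Dimca--Papadima theorem is exactly to overcome this by showing that the analytic germ of $\Sigma^i_k(U)$ at any point is determined by an infinitesimal algebraic jump locus that matches the de Rham germ through $RH_\tau$. Making this transfer rigorous, and patching across different $\tau$, is where the technical weight of the argument lies; once that is in place, the torsion property follows from the finiteness of components together with the boundedness of the Galois orbits.
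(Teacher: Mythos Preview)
Your setup is correct --- spreading out $(X,D)$ over $\bar\qq$, using the moduli of logarithmic connections, and identifying $H^i(U,L_\rho)$ with log de Rham hypercohomology --- and this is exactly how the paper begins. But the final step has two genuine gaps.

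First, you invoke Dimca--Papadima ``germ-wise'' to conclude that every component of $\Sigma^i_k(U)$ is a translate of a subtorus. This is not what their theorem gives: Theorem~\ref{dimca} applies only at the trivial representation $\mathbf{1}$, because its proof rests on Morgan's Gysin model for the rational homotopy type of $U$, which is a model centered at the trivial local system. There is no version of it at an arbitrary point of $\mb(U)$. The paper closes this gap with a finite abelian cover trick (Proposition~\ref{cover}): given a \emph{torsion} point $\tau$ on a component $T$, pass to the cyclic cover $V\to U$ determined by $\tau$, so that $f^\star(\tau)=\mathbf{1}$ in $\mb(V)$; then Dimca--Papadima at $\mathbf{1}\in\mb(V)$ forces $f^\star(T)$ to be a subtorus, hence $T$ is a translate of a subtorus. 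This reduction requires knowing in advance that $T$ contains a torsion point.

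Second, your Galois argument for producing that torsion point does not go through. In the projective case $RH\colon\mdr(X)\to\mb(X)$ is an analytic \emph{isomorphism}, so the two $\bar\qq$-structures can be compared directly and Simpson's argument applies. In the quasi-projective case $RH\colon\mdr(X/D)\to\mb(U)$ is only a $\zz^n$-covering, and the Galois action on the de Rham side does not descend to $\mb(U)$; there is no way to speak of the ``Galois orbit of the translation point'' in $\mb(U)$. The paper instead exploits the residue map $res\colon\mdr(X/D)\to\cc^n$ and the evaluation map $ev\colon\mb(U)\to(\cc^*)^n$, which fit into a commutative square with $\exp\colon\cc^n\to(\cc^*)^n$. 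Both $res(S)$ and $ev(T)$ are constructible over $\bar\qq$, and a Gelfond--Schneider argument (Lemma~\ref{cstar}) forces $ev(T)$ to contain a torsion point $\tau$. One then restricts to the fiber $ev^{-1}(\tau)$, which after a torsion twist lands in $\mb(X)$, and there Simpson's original projective theorem \cite[Theorem 3.3]{s2} applies to give a torsion point on $T$. This two-step reduction --- Gelfond--Schneider on the residues, then Simpson on the projective piece --- is the substance you are missing.
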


When $U$ is compact, the theorem is proved in \cite{gl1, gl2}, \cite{a1}, \cite{s2}, with the strongest form appearing in the latter. When $b_1(\bar{U})=0$, Arapura \cite{a2} showed that $\Sigma^i_k(U)$ are union of translates of subtori. The case of unitary rank one local systems on $U$ has been considered in \cite{b} and \cite{l}. Libgober \cite{l} also proved the same theorem for $U=\mathcal{X}-\mathcal{D}$ where $\mathcal{X}$ is a germ of a smooth analytic space, and $\mathcal{D}$ is a divisor of $\mathcal{X}$. Dimca and Papadima were able to prove the following:

\begin{theorem}\cite[Theorem C]{dp}\label{dimca}
Under the same assumption as Theorem \ref{torsion}, every irreducible component of $\Sigma^i_k(U)$ containing $\mathbf{1}\in \mb(U)$ is a subtorus. 
\end{theorem}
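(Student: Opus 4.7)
The plan is to identify the germ $(\Sigma^i_k(U), \mathbf{1})$ with the germ at the origin of an algebraic resonance variety attached to a commutative differential graded algebra (cdga) model of $U$, and to use Morgan's positive-weight mixed Hodge structure on such a model (available when $U$ is smooth quasi-projective) to force components through the origin to be $\qq$-linear subspaces. Rationality of the MHS then identifies these with Lie algebras of subtori.

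\smallskip

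\noindent\textbf{Step 1: From $\Sigma^i_k(U)$ to a resonance variety.} For any connected cdga $(A,d)$ with $A^0=\cc$, set
\[
\mathcal{R}^i_k(A) \;=\; \{\omega\in Z^1(A)\mid \dim_{\cc} H^i(A, d+\omega\wedge)\geq k\},
\]
which is well-defined because $(d+\omega)^2 = 0$ (using $d\omega=0$ and $\omega\wedge\omega=0$ in a graded-commutative algebra) and descends to $H^1(A)$ under gauge transformations $\omega\leadsto\omega+df$. Taking $A$ to be a cdga model of $U$, the twisted complex $(A,d+\omega\wedge)$ computes $H^*(U, L_{\exp(\omega)})$ for $\omega\in H^1(A)=H^1(U,\cc)$ near $0$, by the standard deformation argument from the classical case $\omega=0$. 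Under the exponential $\exp:H^1(U,\cc)\to \mb(U)$ this yields an isomorphism of analytic germs
\[
(\mathcal{R}^i_k(A),0) \;\cong\; (\Sigma^i_k(U), \mathbf{1}).
\]

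\smallskip

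\noindent\textbf{Step 2: Morgan's positive-weight $\qq$-model.} For smooth quasi-projective $U$, Morgan constructs a Sullivan minimal cdga model $\mathcal{M}$ of $U$ defined over $\qq$ with a $\qq$-rational mixed Hodge structure compatible with product and differential, such that generators in degree $p$ lie in weights $[p,2p]$. In particular every generator of $\mathcal{M}^{\geq 1}$ has strictly positive weight, so there is a $\gg_m$-action on $\mathcal{M}$ over $\qq$ by cdga automorphisms scaling the weight-$w$ part by $\lambda^w$.

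\smallskip

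\noindent\textbf{Step 3: Linearity and subtori.} Since cdga automorphisms preserve the dimensions of twisted cohomology, this $\gg_m$-action preserves each $\mathcal{R}^i_k(\mathcal{M})$. Let $V\subset Z^1(\mathcal{M})$ be an irreducible component containing $0$. For any $v\in V$ the orbit $\lambda\cdot v$ lies in $V$ and satisfies $\lim_{\lambda\to 0}\lambda\cdot v=0$ by positivity of weights. Thus $V$ is stable under the contracting $\gg_m$-action and equals its tangent cone at $0$, hence is a linear subspace of $Z^1(\mathcal{M})$. Because the whole picture is over $\qq$, the image $L\subset H^1(\mathcal{M})=H^1(U,\cc)$ is a $\qq$-rational linear subspace, i.e.\ the Lie algebra of an algebraic subtorus $T\subset \mb(U)$. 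The corresponding irreducible component $C$ of $\Sigma^i_k(U)$ through $\mathbf{1}$ has analytic germ $\exp(L)$, which coincides with the germ of $T$ at $\mathbf{1}$; by irreducibility of $C$ and $T$ and equality of germs, $C=T$.

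\smallskip

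\noindent\textbf{Main obstacle.} The heart of the argument is Step 2, namely producing a cdga model over $\qq$ with strictly positive weights on all generators of $\mathcal{M}^{\geq 1}$. This is where smoothness of $U$ is essential (via $W_0 H^1(U)=0$): without strictly positive weights the $\gg_m$-action is no longer contracting, components through $0$ need not coincide with their tangent cones, and the conclusion fails. The $\qq$-rationality of Morgan's MHS is what upgrades a complex linear subspace to the Lie algebra of an honest algebraic subtorus. Step 1 is classical twisted de Rham theory for rank-one coefficients, and Step 3 is elementary $\gg_m$-dynamics together with the Lie theory of algebraic tori.
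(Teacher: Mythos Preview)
This theorem is not proved in the paper; it is quoted from \cite{dp}, and the paper only summarizes the method in two sentences: the question reduces to infinitesimal deformations with cohomology constraints of the trivial local system, which are in general governed by infinite-dimensional models, and the contribution of \cite{dp} is that Morgan's finite-dimensional Gysin model supplies the needed linear-algebra description. Your outline is broadly in this spirit, and Steps~2--3 (a positive-weight $\qq$-structure on a Morgan-type model, the contracting $\gg_m$-action forcing components through $0$ to be $\qq$-linear, hence subtori after exponentiation) do capture the underlying mechanism.

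Two genuine issues, however. First, your Step~1 is not a ``standard deformation argument'' --- it is the technical heart of \cite{dp}. For the honest (infinite-dimensional) de Rham cdga of $U$ the twisted complex does compute $H^*(U,L_{\exp\omega})$, but that cdga carries no positive-weight grading, so Step~3 cannot run there. One must pass to a finite model, and a cdga quasi-isomorphism does not in any obvious way intertwine the twisted differentials $d+\omega\wedge$; establishing that the analytic germ $(\mathcal R^i_k,0)$ is a quasi-isomorphism invariant and matches $(\Sigma^i_k(U),\mathbf 1)$ is precisely where the work lies. The paper's own summary flags this as the obstacle (``governed in general by infinite-dimensional models''). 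Second, you invoke the Sullivan \emph{minimal} model, whereas the paper specifies Morgan's \emph{Gysin} model. This is not cosmetic: for non-nilpotent $\pi_1(U)$ the minimal model is typically infinite-dimensional already in degree~$1$ (its degree-$1$ generators are dual to the Malcev Lie algebra of $\pi_1$), so $\mathcal R^i_k$ is then cut out by rank conditions on maps between infinite-dimensional spaces, it is not a priori an algebraic subvariety, and the tangent-cone argument in Step~3 loses its footing. The Gysin model is finite-dimensional by construction, which is why it is singled out.
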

The proof of this result reduces to the study of the infinitesimal deformations with cohomology constraints of the trivial local system. These are governed in general by infinite-dimensional models. In \cite{dp} it is shown that, in this case, the finite-dimensional Gysin model due to Morgan provides the necessary linear algebra description for the infinitesimal deformations.

The result of Dimca and Papadima serves as a key ingredient of our theorem. In Section 2, we will show that each irreducible component of $\Sigma^i_k(U)$ contains a torsion point. Then, in Section 3, we will see that, thanks to Theorem \ref{dimca}, having a torsion point on an irreducible component of $\Sigma^i_k(U)$ forces this component to be a translate of subtorus. 

There are two other proofs of Simpson's theorem: one via positive characteristic methods \cite{PR}, and one via D-modules \cite{Sc1, Sc2}. However, in this paper we follow the original approach of Simpson. There are no analogous results for higher rank local systems even in the projective case.

\medskip
\noindent
{\bf Acknowledgement.} The first author was partially supported by  the NSA, the Simons Foundation grant 245850, and the BOF-OT KU Leuven grant. 

\section{Torsion points on the cohomology jump loci}
Let $X$ be a smooth complex projective variety, and let $D=\sum_{\lambda=1}^n D_\lambda$ be a simple normal crossing divisor on $X$ with irreducible components $D_\lam$. Let $U=X-D$. Thanks to Hironaka's theorem on resolution of singularities, every smooth quasi-projective variety $U$ can be realized in this way. The goal of this section is to prove the following:
\begin{theorem}\label{torsionp}
Each irreducible component of $\Sigma^i_k(U)$ contains a torsion point. 
\end{theorem}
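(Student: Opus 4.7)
The plan is to adapt Simpson's original non-abelian Hodge approach from the smooth projective case, substituting the Nitsure--Simpson moduli of logarithmic connections on the good compactification $(X,D)$ for the de Rham moduli used there. The overall shape of the argument is: translate the Betti jump loci into an algebraic question about logarithmic connections; use a $\mathbb{G}_m$-action on the moduli of $\lambda$-connections to degenerate towards the logarithmic Higgs moduli; identify the resulting fixed points with variations of Hodge structure on $U$; and conclude via the fact that rank-one $\mathbb{C}$-VHS have torsion monodromy.

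First I would set up the Nitsure--Simpson moduli space $\mathbf{M}_{DR}^{\log}(X,D)$ parametrising pairs $(\mathcal{L},\nabla)$ where $\mathcal{L}$ is a rank-one vector bundle on $X$ and $\nabla:\mathcal{L}\to\mathcal{L}\otimes\Omega^1_X(\log D)$ is a logarithmic connection. After choosing a fundamental domain for the residues modulo $\mathbb{Z}$, Deligne's canonical extension gives a holomorphic section $\sigma:\mathbf{M}_B(U)\to\mathbf{M}_{DR}^{\log}(X,D)$ of the Riemann--Hilbert map, and by Deligne's comparison theorem the cohomology $H^i(U,L_\rho)$ is computed by the logarithmic de Rham complex associated with $\sigma(\rho)$. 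Hence $\Sigma^i_k(U)$ is transported, via $\sigma$, to a jump locus for logarithmic de Rham cohomology, which is algebraic by the Nitsure--Simpson construction.

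Next, I would pass to the moduli $\mathbf{M}_{Hod}^{\log}(X,D)$ of logarithmic $\lambda$-connections, a family over $\mathbb{A}^1_\lambda$ interpolating between $\mathbf{M}_{DR}^{\log}$ at $\lambda=1$ and the logarithmic Higgs moduli $\mathbf{M}_{Dol}^{\log}$ at $\lambda=0$, equipped with the $\mathbb{G}_m$-action $t\cdot(\mathcal{L},\nabla,\lambda)=(\mathcal{L},t\nabla,t\lambda)$. The cohomology-jump condition extends coherently over $\lambda$ to give an algebraic subset $\widetilde{\Sigma}^i_k\subset\mathbf{M}_{Hod}^{\log}$ whose fibre at $\lambda=1$ recovers the de Rham jump locus of the previous step. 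For an irreducible component $W$ of $\Sigma^i_k(U)$, lift $\sigma(W)$ to an irreducible component of $\widetilde{\Sigma}^i_k$ and apply Simpson's technique: use the semi-properness of the Hodge moduli to produce, from a $\mathbb{G}_m$-orbit through a generic point, a limit at $\lambda=0$ that is a $\mathbb{G}_m$-fixed point of $\widetilde{\Sigma}^i_k$. Fixed points on $\mathbf{M}_{Dol}^{\log}$ are systems of Hodge bundles, i.e.\ logarithmic $\mathbb{C}$-VHS on $U$. For rank one, such a VHS has trivial Higgs field and integral structure, so its monodromy eigenvalues are algebraic integers of absolute value one; by Kronecker's theorem they are roots of unity, giving a torsion character of $\pi_1(U)$ in the closure of $W$.

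The hard part, and the reason this requires more than a direct transcription of Simpson's argument, is the interplay between the $\mathbb{G}_m$-action and the residue data. Under $\nabla\mapsto t\nabla$ the residues scale, so the action does \emph{not} descend to $\mathbf{M}_B(U)$, and an orbit in $\mathbf{M}_{Hod}^{\log}$ projects to a moving family of Betti monodromies. One must therefore (i) work with the correct residue-constrained or parabolic version of the Nitsure--Simpson moduli so that the limit at $\lambda=0$ exists and stays inside the chosen component of $\widetilde{\Sigma}^i_k$, (ii) verify that the limiting Higgs fixed point produces a logarithmic VHS whose monodromy representation is the one attached to a point of $W$ itself (and not merely to a $\mathbb{G}_m$-scaled companion), and (iii) check that the logarithmic de Rham cohomology really controls $H^i(U,L_\rho)$ at the boundary residues relevant to the limit. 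This residue bookkeeping, together with the requisite semi-properness, is where the properties of the Nitsure--Simpson moduli of logarithmic connections are used in an essential way.
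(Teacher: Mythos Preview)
Your proposal takes a route fundamentally different from the paper's, and it contains a genuine gap at the decisive step.

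The paper's argument is arithmetic rather than Hodge-theoretic. After spreading $(X,D)$ out to $\bar{\qq}$ (via Thom's isotopy lemma), it exploits the commutative square
\[
\xymatrix{
\mdr(X/D)\ar[r]^-{res}\ar[d]_{RH} & \cc^n\ar[d]^{\exp}\\
\mb(U)\ar[r]^-{ev} & (\cc^*)^n
}
\]
where $res$ records residues of the logarithmic connection and $ev$ records local monodromies around the $D_\lambda$. For an irreducible component $T\subset\Sigma^i_k(U)$ one produces a $\bar\qq$-defined lift $S\subset\mdr(X/D)$ with $RH(S)=T$ (Lemma~\ref{tas}); then $res(S)\subset\cc^n$ and $ev(T)=\exp(res(S))\subset(\cc^*)^n$ are both $\bar\qq$-constructible, and a direct induction based on the Gelfond--Schneider theorem (Lemma~\ref{cstar}) forces $ev(T)$ to contain a torsion point $\tau$. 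Translating by a torsion preimage $\eta\in ev^{-1}(\tau)$ reduces to a $\bar\qq$-subvariety of $\mdr(X)$ mapping under $RH$ into $\mb(X)$, where Simpson's projective result \cite[Theorem~3.3]{s2} applies verbatim. No $\lambda$-connections, no $\gg_m$-degeneration, and no limiting VHS enter.

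The gap in your approach is the assertion that a rank-one $\cc$-VHS ``has trivial Higgs field and integral structure, so its monodromy eigenvalues are algebraic integers of absolute value one; by Kronecker's theorem they are roots of unity.'' A rank-one $\cc$-VHS on $U$ is nothing more than a unitary character of $\pi_1(U)$, and no integral or even algebraic structure comes for free: already for $U=\cc^*$ every $e^{i\theta}\in S^1$ underlies one, and almost none are torsion. Even in the compact case the $\gg_m$-limit of a rank-one flat bundle is merely the associated unitary bundle, not a torsion one; Simpson's own proof of the torsion statement in \cite{s2} is not via $\gg_m$-degeneration but via the comparison of $\bar\qq$-structures on the Betti and de Rham moduli together with Gelfond--Schneider. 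Your $\gg_m$-technique is relevant to showing components are \emph{translates of subtori}, but it cannot by itself supply the torsion point---the arithmetic input is missing entirely from your outline. The residue-scaling difficulty you flag is real and compounds the problem (the $\gg_m$-orbit leaves the Betti fibre over $W$, so the limit need not lie in $W$ at all), but even if that bookkeeping were carried out, the Kronecker step would still fail.
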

First, we want to reduce to the case when $X$ and each $D_\lam$ are defined over $\bar\qq$. This can be done using a technique which we have learnt from the proof of \cite[Theorem 4.1]{s2}. We reproduce it here. 

We can assume $X$ and each $D_\lambda$ to be defined over a subring $O$ of $\cc$, which is finitely generated over $\qq$. Denote the embedding of $O$ to $\cc$ by $\sigma: O\to \cc$. Each ring homomorphism $O\to \cc$ corresponds to a point in $\spec(O)(\cc)$. Denote by $X^0$ and $D_\lambda^0$ the schemes over $\spec(O)$ which give rise to $X$ and $D_\lambda$ respectively after tensoring with $\cc$, that is $X=X^0\times_{\spec(O)}\spec(\cc)$ and $D_\lambda=D_\lambda^0\times_{\spec(O)}\spec(\cc)$. By possibly replacing $O$ by $O[\frac{1}{h}]$ for some $h\in O$, we can assume $X^0$ and every $D_\lambda^0$ are smooth over $\spec(O)$, and all the intersections of $D_\lambda^0$'s are transverse. Since each connected component of $\spec(O)(\cc)$ contains a $\bar\qq$ point, there exists a point $P\in \spec(O)(\bar\qq)$, and a continuous path from $\sigma\in \spec(O)(\cc)$ to $P$ in $\spec(O)(\cc)^{\textrm{top}}$. Then, according to Thom's First Isotopy Lemma \cite[Ch. 1, Theorem 3.5]{db}, $X^0(\cc)$ together with its strata given by the $D_\lambda^0(\cc)$, is a topologically locally trivial fibration in the stratified sense over $\spec(O)(\cc)^{\textrm{top}}$. In particular, letting $X'$ and $D'_\lambda$ be the corresponding fibers over $P$, transporting along the path gives an isomorphism $(X-D)^{\textrm{top}}\cong (X'-D')^{\textrm{top}}$. Recall  that $\mb(U)$ and $\Sigma^i_k(U)$ depend only on the topology of $U$. Hence replacing $U=X-D$ by $U'=X'-D'$, we may assume that $X$ and each $D_\lam$ are defined over $\bar\qq$. 

Next, we introduce the other side of the story, namely the logarithmic flat bundles on $(X, D)$. A logarithmic flat bundle on $(X, D)$ consists of a vector bundle $E$ on $X$, and a logarithmic connection $\nabla: E\to E\otimes \Omega^1_X(\log D)$, satisfying the integrability condition $\nabla^2=0$. Given a logarithmic flat bundle $(E, \nabla)$, the flat sections of $E$ on $U$ (by which we will always mean on $U^{\textrm{top}}$) form a local system. And conversely, given any local system $L$ on $U$ (by which, as in the introduction, we will always mean a local system on $U^{\textrm{top}}$), it is always obtained from some logarithmic flat bundle $(E, \nabla)$. However, different logarithmic flat bundles may give the same local system. This correspondence between local systems on $U$ and logarithmic flat bundles on $(X, D)$ is very well understood (e.g. \cite{d}, \cite{s1}, \cite{m}). 

For a vector bundle $E$ on $X$, the structure of a logarithmic flat bundle $(E,\nabla)$ on $(X,D)$ is the same as a $\sD_X(\log D)$-module structure on $E$, where $\sD_X(\log D)$ is the sheaf of logarithmic differentials.

Nitsure \cite{n} and Simpson \cite{s3} constructed coarse moduli spaces, which are separated quasi-projective schemes, for Jordan-equivalence classes of semistable $\Lambda$-modules which are $\sO_X$-coherent and torsion free, where $\Lambda$ is a sheaf of rings of differential operators. The two examples of $\Lambda$ which we are concerned with are $\sD_X$, the usual sheaf of differential operators on $X$, and $\sD_X(\log D)$, the sheaf of logarithmic differentials. We denote by $\mdr(X)$ and $\mdr(X/D)$ the moduli space of rank one $\sD_X$-modules and the moduli space of rank one $\sD_X(\log D)$-modules, respectively. In the rank one case, semistable is the same as stable and this condition is automatic as is the locally free condition, and Jordan-equivalence is the same as isomorphic. Thus, the points of $\mdr(X)$ and $\mdr(X/D)$ correspond to isomorphism classes of flat, respectively, logarithmic flat line bundles. Since we did not put any condition on the Chern class of the underlying line bundles, in general $\mdr(X/D)$ has infinitely many connected components. $\mdr(X)$, $\mdr(X/D)$, $\mb(X)$ and $\mb(U)$ are all algebraic groups, except $\mdr(X/D)$ may not be of finite type. 

The following diagram plays an essential role in our proof.
$$
\xymatrix{
&&0\ar[d]&0\ar[d]\\
&&\zz^n\ar[d]\ar@2{-}[r]&\zz^n\ar[d]\\
0\ar[r]&\mdr(X)\ar[d]^{RH}\ar[r]&\mdr(X/D)\ar[d]^{RH}\ar[r]^{\quad res}&\cc^n\ar[d]^\exp \\
0\ar[r]&\mb(X)\ar[r]&\mb(U)\ar[d]\ar[r]^{ev}&(\cc^*)^n\ar[d]\\
&&0&0
}
$$

Let us first explain how the arrows are defined. Since every $\sD_X$-module is naturally a $\sD_X(\log D)$-module, there is a natural embedding $\mdr(X)\hookrightarrow \mdr(X/D)$. On the other hand, the embedding $U\hookrightarrow X$ induces a surjective map on the fundamental group $\pi_1(U)\to \pi_1(X)$. Composing this map with the representations, we have $\mb(X)\hookrightarrow \mb(U)$. For every rank one logarithmic flat bundle $(E, \nabla)$, taking the residue along each $D_\lambda$ is the map $res$. In other words, $res((E, \nabla))=\{res_{D_\lambda}(\nabla)\}_{1\leq \lambda\leq n}$. Around each $D_\lambda$, we can take a small loop $\gamma_\lambda$. The map $ev$ is the evaluation at the loops $\gamma_\lambda$. More precisely $ev(\rho)=\{\rho(\gamma_\lambda)\}_{1\leq \lambda\leq n}$. 

For the horizontal arrows, $RH: \mdr(X)\to \mb(X)$ is taking the monodromy representations for flat bundles. Since every logarithmic flat bundle on $(X, D)$ restricts to a flat bundle on $U$, taking the monodromy representation on $U$ is $RH: \mdr(X/D)\to \mb(U)$. The map $\exp: \cc^n\to (\cc^*)^n$ is component-wise defined to be multiplying by $2\pi\sqrt{-1}$, then taking exponential. On $\mdr(X/D)$, there are some special elements. Let $(\sO_X, d)$ be the trivial rank one logarithmic flat bundle on $(X, D)$. Notice that $\sO_X(-D_\lambda)$ is preserved under $d$, that is, there is an induced map $d: \sO_X(-D_\lambda)\to \sO_X(-D_\lambda)\otimes \Omega^i_X(\log D)$. Therefore, $(\sO_X(-D_\lambda), d)$ is also a logarithmic flat bundle on $(X, D)$. The map $\zz^n\to \mdr(X/D)$ is defined by $\{m_\lambda\}_{1\leq \lambda\leq n}\mapsto \bigotimes_{1\leq \lambda\leq n}(\sO_X(-D_\lambda), d)^{\otimes m_\lambda}$. The map $\zz^n\to \cc^n$ is the natural inclusion map. 

Notice that all the maps are group homomorphisms, all the rows and columns are exact.  The first map $RH$ is an analytic isomorphism, since $\mdr(X)$ and $\mb(X)$ analytically represent the same functor (\cite{s4}). Similarly, the quotient $\mdr(X/D)/\zz^n$ and $\mb(U)$ represent the same functor from the category of analytic spaces to the category of sets. Therefore, by Yoneda's lemma, $RH: \mdr(X/D)\to \mb(U)$ is an analytic covering map with transformation group $\zz^n$. The map $\exp$ is obviously an analytic covering map. 

According to the discussion following Theorem \ref{torsionp}, we can assume $X$ and each $D_\lam$ to be defined over $\bar\qq$ without loss of generality. Then $\mdr(X/D)$ and $\mdr(X)$ are also defined over $\bar\qq$. The representation varieties $\mb(U)$ and $\mb(X)$ are always defined over $\qq$. Therefore, all the horizontal arrows in the above diagram are maps defined over $\bar\qq$. From now on, we should think of $\cc^n$ and $(\cc^*)^n$ as varieties defined over $\bar\qq$, or in other words, as $\aaa^n_\cc=\aaa_{\bar\qq}^n\times_{\bar\qq}\cc$ and $(\gg_{m,\cc})^n=(\gg_{m,{\bar\qq}})^n\times_{\bar\qq}\cc$, respectively.

\begin{lemma}\label{cstar}
Suppose $Z\subset \cc^n$ is a non-empty Zariski constructible subset defined over $\bar\qq$. Suppose $\exp(Z)\subset (\cc^*)^n$ is also a a Zariski constructible subset defined over $\bar\qq$. Then $\exp(Z)$ contains a torsion point. 
\end{lemma}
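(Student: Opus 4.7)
The plan is to induct on $n$, invoking as the only transcendence ingredient the following consequence of the Gelfond--Schneider theorem: for $w \in \bar\qq$, one has $e^{2\pi i w} \in \bar\qq$ if and only if $w \in \qq$. Indeed, if $\gamma = e^{2\pi i w} \in \bar\qq \setminus \{0,1\}$, then $\log\gamma / \log(-1) = 2w$ is algebraic, forcing $2w \in \qq$ by Gelfond--Schneider. With this tool, the base case $n = 1$ is immediate: a constructible subset of $\cc$ over $\bar\qq$ is finite or cofinite. If $Z$ is cofinite then $\exp(Z) = \cc^*$ (every $\zz$-coset meets $Z$) and trivially contains torsion. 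If $Z$ is finite then $Z \subset \bar\qq$ and $\exp(Z) \subset \bar\qq^*$, and the transcendence input forces every $w \in Z$ to lie in $\qq$, so every $\exp(w)$ is a root of unity.

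For the inductive step, let $\pi \colon \cc^n \to \cc^{n-1}$ and $\bar\pi \colon (\cc^*)^n \to (\cc^*)^{n-1}$ be the projections onto the first $n-1$ coordinates, which satisfy $\bar\pi \circ \exp = \exp \circ \pi$. By Chevalley's constructibility theorem, both $\pi(Z)$ and $\exp(\pi(Z)) = \bar\pi(\exp(Z))$ are nonempty constructible sets defined over $\bar\qq$, so the inductive hypothesis furnishes a torsion point $\tau \in \exp(\pi(Z))$. Since $\tau$ is torsion, its entire $\exp$-preimage lies in $\qq^{n-1}$, so we may pick $z' \in \pi(Z) \cap \qq^{n-1}$ with $\exp(z') = \tau$. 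Now examine the fiber
\begin{equation*}
\Phi := \exp(Z) \cap \bigl(\{\tau\} \times \cc^*\bigr),
\end{equation*}
which is nonempty (any $(z', w) \in Z$ contributes $(\tau, e^{2\pi i w}) \in \Phi$) and constructible over $\bar\qq$; viewed as a constructible subset of $\cc^*$, it is finite or cofinite.

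If $\Phi$ is cofinite, it misses only finitely many points of $\cc^*$ and therefore contains a root of unity $\zeta$, yielding a torsion point $(\tau, \zeta) \in \exp(Z)$. If $\Phi$ is finite, then $\Phi \subset \bar\qq^*$, and we may write $\Phi = \exp\bigl(\bigsqcup_{m \in \zz^{n-1}} F_m\bigr)$ with $F_m = \{w \in \cc : (z' + m, w) \in Z\}$. Each $F_m$ is constructible in $\cc$ over $\bar\qq$ (since $z' + m \in \qq^{n-1}$) and hence finite or cofinite; no $F_m$ can be cofinite, for then $\Phi = \cc^*$. So every $F_m$ is finite and contained in $\bar\qq$, and the transcendence input applied to any $w \in F_m$, whose image $\exp(w) \in \Phi$ is algebraic, gives $w \in \qq$. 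Hence $\exp(w)$ is a root of unity and $(\tau, \exp(w)) \in \exp(Z)$ is the sought torsion point.

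The only genuine obstacle is the transcendence statement; everything else is Chevalley together with the trichotomy that a constructible subset of $\cc$ or $\cc^*$ is finite or cofinite. The one subtle point is the $\zz^{n-1}$-ambiguity in the fiber of $\exp$ over $\tau$, which is precisely what the disjoint union over $m \in \zz^{n-1}$ accounts for, and which does no harm because the finite/cofinite dichotomy allows us to rule out the cofinite case uniformly.
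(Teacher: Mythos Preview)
Your proof is correct and follows essentially the same approach as the paper's: induction on $n$ via projection to the first $n-1$ coordinates, Gelfond--Schneider as the sole transcendence input, and a finite/cofinite case analysis on the fiber over the torsion point supplied by the inductive hypothesis. Your treatment of the $\zz^{n-1}$-ambiguity via the family $\{F_m\}$ and your finite/cofinite dichotomy are slightly more explicit than the paper's version (which picks a single line $N$ in $\exp^{-1}(p_2^{-1}(\tau))$ meeting $Z$ and splits on whether $M\subset\exp(Z)$), but the structure is identical.
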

\begin{proof}
When $n=1$, this follows from the Gelfond-Schneider theorem, which says if $\alpha$ and $e^{2\pi\sqrt{-1}\alpha}$ are both algebraic numbers, then $\alpha\in \qq$. 

We use induction on $n$. Suppose the lemma is true for $\cc^{n-1}$. Let $p_1: \cc^n\to \cc^{n-1}$ and $p_2: (\cc^*)^n\to (\cc^*)^{n-1}$ be the projections to the first $n-1$ factors. Then $p_1(Z)\subset \cc^{n-1}$ and $p_2(\exp(Z))\subset (\cc^*)^{n-1}$ are both defined over $\bar\qq$. Since $\exp(p_1(Z))=p_2(\exp(Z))$, by induction hypothesis, $\exp(p_1(Z))$ contains a torsion point $\tau$ in $(\cc^*)^{n-1}$. 

Let $M=p_2^{-1}(\tau)$. Then $\exp^{-1}(M)$ is a disjoint union of infinitely many copies of $\cc$. Choose one copy of those, which intersects with $Z$. Denote this copy by $N$. Since $\tau$ is a torsion point in $(\cc^*)^{n-1}$, $N$ is defined by equations with $\qq$ coefficients. Consider the following diagram,
$$
\xymatrix{
N\ar[r]^{q_1}\ar[d]_{\exp}&\cc\ar[d]^{\exp}\\
M\ar[r]^{q_2}&\cc^*
}
$$
where $q_1$ and $q_2$ are projections to the last coordinates respectively. Then $q_1$ and $q_2$ are isomorphisms defined over $\bar\qq$. If $M\subset \exp(Z)$, then every torsion point in $\cc^*$ via $q_2^{-1}$ gives a torsion point in $\exp(Z)$. If $M\nsubseteq \exp(Z)$, then $M\cap \exp(Z)$ contains finitely many points. Hence, $N\cap Z$ also contains finitely many points. In this case, let $\sigma$ be any point in $N\cap Z$, $q_1(\sigma)\in \cc$ is defined over $\bar\qq$. On the other hand,  $\exp(\sigma)$ is a point in $M\cap \exp(Z)$, and hence defined over $\bar\qq$. Thus, $q_2(\exp(\sigma))=\exp(q_1(\sigma))\in \cc^*$ is defined over $\bar\qq$. Now, the Gelfond-Schneider theorem implies that $p_2(\exp(\sigma))$ is torsion in $\cc^*$. Since $q_2(\exp(\sigma))$ is torsion in $\cc^*$ and $p_2(\exp(\sigma))=\tau$ is torsion in $(\cc^*)^{n-1}$, $\exp(\sigma)\in \exp(Z)$ is a torsion point in $(\cc^*)^n$.  
\end{proof}
\begin{rmk}
In fact, Jiu-Kang Yu has pointed out to us that, using Hilbert's irreducibility theorem one, can prove that if $Z$ and $\exp(Z)$ are closed irreducible subvarieties defined over $\bar\qq$, then $\exp(Z)$ is a torsion translate of subtorus. We give the proof in the appendix. 
\end{rmk}

Remember that we assume that $X$ and each $D_\lam$ are defined over $\bar\qq$. 

\begin{lemma}\label{tas}
Let $T$ be an irreducible component of $\Sigma^i_k(U)$. Then there exists an irreducible subvariety $S$ of $\mdr(X/D)$ defined over $\bar\qq$ such that $RH(S)=T$. 
\end{lemma}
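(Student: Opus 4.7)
The plan is to lift $T$ to the de Rham moduli space by constructing algebraic jump loci using the hypercohomology of the logarithmic de Rham complex, and then isolate the correct irreducible component using Deligne's canonical extension.

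First, for each pair $(i,k)$, I introduce the algebraic logarithmic de Rham jump locus
$$\Sigma^i_{k,\mathrm{DR}}(X/D) := \{(E,\nabla)\in \mdr(X/D) : \dim_\cc \sH^i(X, E\otimes \Omega^\bullet_X(\log D))\geq k\}.$$
By upper semicontinuity of fiberwise hypercohomology dimensions (via Fitting ideals of a coherent resolution of the log de Rham complex), this is a Zariski closed subvariety of $\mdr(X/D)$. Since $X$ and the $D_\lambda$ are defined over $\bar\qq$, so is $\Sigma^i_{k,\mathrm{DR}}(X/D)$.

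Second, I invoke Deligne's canonical extension: for each $\rho\in \mb(U)$ one has a canonical logarithmic flat bundle $\delta(\rho)\in \mdr(X/D)$ with monodromy $\rho$ and residues whose real parts lie in $[0,1)$. Deligne's comparison theorem yields $\sH^i(X,\delta(\rho)\otimes\Omega^\bullet_X(\log D))\cong H^i(U,L_\rho)$, so $\delta\colon \mb(U)\to \mdr(X/D)$ is an analytic section of $RH$ that sends $\Sigma^i_k(U)$ into $\Sigma^i_{k,\mathrm{DR}}(X/D)$. Applying $\delta$ to the given irreducible component $T$, the image $\delta(T)$ is an irreducible analytic subvariety of $\Sigma^i_{k,\mathrm{DR}}(X/D)$ of complex dimension $\dim T$; its Zariski closure $\overline{\delta(T)}$ is an irreducible algebraic subvariety, hence is contained in some irreducible algebraic component $S$ of $\Sigma^i_{k,\mathrm{DR}}(X/D)$. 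This $S$ is an irreducible subvariety of $\mdr(X/D)$ defined over $\bar\qq$.

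Finally, one checks $RH(S)=T$. Let $W\subset \mdr(X/D)$ be the complex-open dense subset on which no residue equals a nonzero integer. On $W$, the Deligne--Malgrange comparison yields $\dim \sH^i(X,E\otimes\Omega^\bullet_X(\log D))=\dim H^i(U,L_\rho)$, so $RH(\Sigma^i_{k,\mathrm{DR}}(X/D)\cap W)\subset \Sigma^i_k(U)$. Deligne residues lie in $[0,1)$ and are never nonzero integers, hence $\delta(T)\subset S\cap W$. Since $S$ is irreducible and $S\cap W$ is a nonempty complex-open subset, $S\cap W$ is Zariski dense in $S$; continuity of $RH$ then makes $RH(S\cap W)$ complex-dense in $RH(S)$, and since $\Sigma^i_k(U)$ is Zariski (hence complex) closed, $RH(S)\subset \Sigma^i_k(U)$. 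The image $RH(S)$, being the continuous image of the irreducible $S$, is irreducible, so its Zariski closure is an irreducible Zariski-closed subset of $\Sigma^i_k(U)$ containing $T=RH(\delta(T))$. By maximality of $T$ as an irreducible component of $\Sigma^i_k(U)$, this closure equals $T$, giving $RH(S)\subset T$; combined with $T\subset RH(S)$, we obtain $RH(S)=T$. The most delicate step will be the Deligne--Malgrange comparison on $W$ together with the analytic-to-algebraic density argument, where the mismatch between the algebraic structure on $\mdr(X/D)$ and the merely-analytic nature of $RH$ must be carefully navigated.
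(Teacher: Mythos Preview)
Your overall strategy coincides with the paper's: introduce the algebraic logarithmic de Rham jump locus $\Sigma^i_k(X/D)\subset\mdr(X/D)$ (closed and defined over $\bar\qq$ by semicontinuity), invoke Deligne's comparison theorem away from the bad residue locus, and use the Deligne extension to land on the right irreducible component. The density/maximality argument you give at the end is a valid route to $RH(S)=T$, and the paper's argument is essentially a terser version of the same idea.

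There is, however, a genuine gap in your second step. The Deligne canonical extension does \emph{not} furnish a global analytic, or even continuous, section $\delta:\mb(U)\to\mdr(X/D)$. In rank one the residue along $D_\lambda$ is the unique $\alpha_\lambda$ with $\exp(-2\pi i\alpha_\lambda)$ equal to the local monodromy and $\Re(\alpha_\lambda)\in[0,1)$; this choice jumps along a real-codimension-one locus in $\mb(U)$ (where the argument of the monodromy crosses the branch cut). Consequently $\delta(T)$ need not be an irreducible analytic set, and its Zariski closure may meet several distinct irreducible components of $\Sigma^i_{k,\mathrm{DR}}(X/D)$. Your chain $\delta(T)\subset S$, hence $T=RH(\delta(T))\subset RH(S)$, is therefore unjustified as written.

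The paper avoids this by exploiting directly that $RH:\mdr(X/D)\to\mb(U)$ is an analytic covering map with deck group $\zz^n$. One fixes any $\rho_0\in T$, takes its Deligne extension $(E_0,\nabla_0)$ (used only as a basepoint, not as a global section), and lets $S$ be the unique irreducible component of $RH^{-1}(T)$ through $(E_0,\nabla_0)$. The covering property gives $RH(S)=T$ at once. Since $(E_0,\nabla_0)$ lies outside the bad locus $BL$ and $RH$ identifies $\Sigma^i_k(X/D)\setminus BL$ with $RH^{-1}(\Sigma^i_k(U))\setminus BL$ locally, this $S$ is in fact an irreducible component of the algebraic set $\Sigma^i_k(X/D)$, hence Zariski closed and defined over $\bar\qq$. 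Your argument is repaired by exactly this move: replace the putative global section $\delta$ by a single sheet of the covering over $T$.
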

\begin{proof}
For any $\rho\in \mb(U)$, $RH^{-1}(\rho)$ contains all the possible extensions of $L_\rho$ to a logarithmic flat bundle over $(X, D)$. Suppose $(E, \nabla)\in RH^{-1}(L)$, and suppose $\nabla$ does not have any residue being equal to a positive integer, that is, $res((E, \nabla))$ does not have any positive integer in its coordinates. Then by a theorem of Deligne \cite[II, 6.10]{d}, the hypercohomology of the algebraic de Rham complex 
$$E\otimes \Omega^\bullet_X(\log D)=[E\stackrel{\nabla}{\longrightarrow}E\otimes \Omega^1_X(\log D)\stackrel{\nabla}{\longrightarrow}E\otimes \Omega^2_X(\log D)\stackrel{\nabla}{\longrightarrow}\cdots]$$
computes the cohomology of the local system $L$, i.e., $\hh^i(X, E\otimes \Omega^\bullet_X(\log D))\cong H^i(U, L_\rho)$.

Define the bad locus ${BL}\subset \mdr(X/D)$ to be the locus where one of the residues of $\nabla$ is a positive integer. Then ${BL}$ is the preimage of infinitely many hyperplanes in $\cc^n$ via $res$. Define $$\Sigma^i_k(X/D)=\{(E, \nabla)\in \mdr(X/D)\;|\; \dim \hh^i(X, E\otimes \Omega^\bullet_X(\log D))\geq k\}.$$ 
Given any point $\rho_0$ in $\Sigma^i_k(U)$, one can always find an extension $(E_0, \nabla_0)\in \mdr(X/D)$ of $L_{\rho_0}$, which is not in ${BL}$, e.g., the Deligne extension. Then  $RH(\Sigma^i_k(X/D)-{BL})=\Sigma^i_k(U)$. 

Now, given $T\subset \Sigma^i_k(U)$ as an irreducible component, take any point $\rho_0$ in $T$. Since $RH$ is analytically a covering map, there is a unique irreducible component $S$ of $RH^{-1}(T)$ containing the Deligne extension $(E_0, \nabla_0)$ of $L_{\rho_0}$. Since $S\nsubseteq {BL}$ and since $RH$ is analytically a covering map,  we have $RH(S)=T$. By semicontinuity theorem, $\Sigma^i_k(X/D)\subset \mdr(X/D)$ is closed and defined over $\bar\qq$. Since $S$ is an irreducible component of $\Sigma^i_k(X/D)$, $S$ is closed and defined over $\bar\qq$. \end{proof}

Now, we are ready to prove Theorem \ref{torsionp}. 
\begin{proof}[Proof of Theorem \ref{torsionp}]
Let $T$ be an irreducible component of $\Sigma^i_k(U)$. By \cite[Lemma 9.2]{dp}, $\Sigma^i_k(U)$ is defined by some Fitting ideal coming from the CW-complex structure of $U$. Thus, $\Sigma^i_k(U)\subset \mb(U)$ is defined over $\qq$. Hence, as an irreducible component of $\Sigma^i_k(U)$, $T$ is defined over $\bar\qq$. According to Lemma \ref{tas}, there exists $S\subset\mdr(X/D)$ defined over $\bar\qq$ such that $RH(S)=T$. Then $res(S)\subset \cc^n$ and $ev(T)\subset (\cc^*)^n$ are defined over $\bar\qq$, and moreover, $\exp(res(S))=ev(T)$. According to Lemma \ref{cstar}, $ev(T)$ contains a torsion point $\tau$. 

Since $\tau\in (\cc^*)^n$ is torsion, we can take $l\in \zz_+$ such that $\tau^l=\mathbf{1}\in (\cc^*)^n$. Then the image of the $l$-power map $(\,\cdot\,)^l:ev^{-1}(\tau){\longrightarrow} \mb(U)$ is equal to $\mb(X)$.  Choose $\eta\in ev^{-1}(\tau)$, such that $\eta^l=\mathbf{1}$. 
Every $\xi\in RH^{-1}(\eta)$ is a $\bar\qq$ point in $\mdr(X/D)$. In fact, since $\eta^l=\mathbf{1}$ in $\mb(U)$, $\xi^l$ is in the image of $\zz^n\to \mdr(X/D)$. Recall that the image of $\{m_\alpha\}_{1\leq \alpha\leq n}$ under $\zz^n\to \mdr(X/D)$ is $\bigotimes_{1\leq \lambda\leq n}(\sO_X(-D_\lambda), d)^{\otimes m_\lambda}$, which is clearly a $\bar\qq$ point in $\mdr(X/D)$. Therefore, $\xi$, as an $l$-th root of a $\bar\qq$ point, has to be a $\bar\qq$ point. 

Notice that 
$$(ev\circ RH)^{-1}(\tau)=\bigcup_{\xi\in RH^{-1}(\eta)}\left(\xi\cdot\mdr(X)\right).$$
Moreover, since $RH(S)=T$, 
\begin{align*}
T\cap ev^{-1}(\tau)&=RH(S)\cap ev^{-1}(\tau)\\
&=RH\big(S\cap(RH\circ ev)^{-1}(\tau)\big)\\
&=\bigcup_{\xi\in RH^{-1}(\eta)}RH\Big(S\cap \big(\xi\cdot\mdr(X)\big)\Big).
\end{align*}
Each $RH(S\cap (\xi\cdot\mdr(X)))$ is closed in $\mb(U)$, and $T\cap ev^{-1}(\tau)$ is a noetherian topological space. Hence, for some $\xi_0\in RH^{-1}(\eta)$, $RH(S\cap (\xi_0\cdot\mdr(X)))$ contains an irreducible component of $T\cap ev^{-1}(\tau)$. Since $RH(\xi_0)=\eta$, $RH((\xi_0^{-1}\cdot S)\cap\mdr(X))$ contains an irreducible component of $\eta^{-1}\cdot(T\cap ev^{-1}(\tau))$. Recall that $\eta\in ev^{-1}(\tau)$. Thus, $\eta^{-1}\cdot(T\cap ev^{-1}(\tau))\subset \mb(X)$. 

Now, $RH$ maps an irreducible component $W$ of $(\xi_0^{-1}\cdot S)\cap\mdr(X)$ to an irreducible component $RH(W)$ of $\eta^{-1}\cdot(T\cap ev^{-1}(\tau))\subset \mb(X)$. Both of these irreducible components are defined over $\bar\qq$. Indeed, since $\xi_0$ and $S$ are defined over $\bar\qq$ in $\mdr(X/D)$, and $\eta$, $T$, $ev^{-1}(\tau)$ are defined over $\bar\qq$ in $\mb(U)$, $(\xi_0^{-1}\cdot S)\cap\mdr(X)$ and $\eta^{-1}\cdot(T\cap ev^{-1}(\tau))\subset \mb(X)$ are defined over $\bar\qq$ in $\mdr(X/D)$ and $\mb(U)$, respectively. Hence, the same is true for their irreducible components. Thus, we can apply \cite[Theorem 3.3]{s2} which says that this irreducible component $RH(W)$ of $\eta^{-1}\cdot(T\cap ev^{-1}(\tau))\subset \mb(X)$ is a torsion translate of a subtorus. In particular, $\eta^{-1}\cdot(T\cap ev^{-1}(\tau))\subset \mb(X)$ contains a torsion point. Since $\eta$ is also a torsion point, $T$ must contain a torsion point. 
\end{proof}

\section{Finite abelian covers}
First, we consider a more general situation. Let $U$ be a  connected, finite-type CW-complex, and let $\mb(U)=\homo (\pi_1(U), \cc^*)$ be the moduli space of rank one local systems on $U$, which is naturally an algebraic group. Suppose $\tau\in \mb(U)$ is a torsion point. Denote the universal cover of $U$ by $\tilde{U}$, and let $H$ be the kernel of $\tau: \pi_1(U)\to \cc^*$. Then $H$ acts on $\tilde{U}$ and we denote the quotient $\tilde{U}/H$ by $V$. Now, $\langle \tau\rangle$, the subgroup of $\mb(U)$ generated by $\tau$, acts on $V$, and the quotient $V/\langle \tau\rangle=U$. Denote this quotient by $f: V\to U$. Composing with $f_*: \pi_1(V)\to \pi_1(U)$, $f$ induces a homomorphism of algebraic groups $f^\star: \mb(U)\to \mb(V)$. Under this construction, we immediately have $f^\star(\tau)=\mathbf{1}\in\mb(V)$ is the identity element, i.e., $f^\star(\tau)$ maps every element in $\pi_1(V)$ to 1. The main result of this section is the following: 

\begin{prop}\label{cover}
Fixing $i$, suppose that for every $k\in \zz_+$, each irreducible component of $\Sigma^i_k(V)$ containing $\mathbf{1}$ is a subtorus. Then for every $k\in \zz_+$, each irreducible component of $\Sigma^i_k(U)$ containing $\tau$ is a translate of subtorus. 
\end{prop}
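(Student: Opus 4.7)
The plan is to transfer the question from $U$ to the Galois cover $V$ via the projection formula, invoke the hypothesis on $V$ to produce a subtorus, and pull this subtorus back to $\mb(U)$; the crux is a generic-value matching argument that promotes a sum inequality into an inequality on a single summand.

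Since $f: V \to U$ is a finite \'etale Galois cover with group $G = \langle\tau\rangle$ of order $d$, the decomposition of local systems $f_* \cc_V = \bigoplus_{j=0}^{d-1} L_{\tau^j}$ on $U$ together with the projection formula yield
$$H^i(V, f^* L_\rho) = \bigoplus_{j=0}^{d-1} H^i(U, L_{\rho \tau^j}) \qquad \text{for every } \rho \in \mb(U).$$
Let $T$ be an irreducible component of $\Sigma^i_k(U)$ containing $\tau$. For each $j$ denote by $k_j$ the generic value of $\dim H^i(U, L_{\rho\tau^j})$ as $\rho$ varies in $T$, so $k_0 \geq k$, and set $K := \sum_{j=0}^{d-1} k_j$. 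Upper semicontinuity and the projection formula give $f^\star(T) \subseteq \Sigma^i_K(V)$ with generic equality on $T$, so $\overline{f^\star(T)}$ is an irreducible closed subvariety of $\Sigma^i_K(V)$ containing $\mathbf{1} = f^\star(\tau)$ and is therefore contained in some irreducible component $C$ of $\Sigma^i_K(V)$ through $\mathbf{1}$; by hypothesis $C$ is a subtorus of $\mb(V)$.

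The preimage $f^{\star-1}(C)$ is a closed algebraic subgroup of $\mb(U)$ whose identity component $\tilde{C}$ is a subtorus, and whose remaining connected components are torsion translates of $\tilde{C}$ by representatives of $\ker f^\star = \langle\tau\rangle$. Since $T$ is irreducible and contains $\tau$, it lies in the unique component $\tau\tilde{C}$ of $f^{\star-1}(C)$ through $\tau$, giving $T \subseteq \tau\tilde{C}$, or equivalently $\tau^{-1}T \subseteq \tilde{C}$. By maximality of $T$ as an irreducible component of $\Sigma^i_k(U)$ through $\tau$, the reverse inclusion $\tau\tilde{C} \subseteq T$ will follow from $\tau\tilde{C} \subseteq \Sigma^i_k(U)$, i.e., from $\dim H^i(U, L_{\alpha\tau}) \geq k$ for every $\alpha \in \tilde{C}$.

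To establish this last inequality, let $k_j^{\tilde{C}}$ denote the generic value of $\dim H^i(U, L_{\alpha\tau^j})$ on $\tilde{C}$. On one hand, $\sum_j k_j^{\tilde{C}}$ equals the generic value of $\dim H^i(V, f^* L_\alpha)$ on $\tilde{C}$ by the projection formula; the chain $f^\star(\tilde{C}) \subseteq C \subseteq \Sigma^i_K(V)$ together with the fact that $\overline{f^\star(T)} \subseteq f^\star(\tilde{C})$ has generic $\dim H^i(V,\cdot)$ equal to $K$ forces this generic value to be $K$, so $\sum_j k_j^{\tilde{C}} = K$. On the other hand, re-parametrizing $\tau^{-1}T \subseteq \tilde{C}$ by $\alpha = \tau^{-1}\rho$ gives $\alpha\tau^j = \rho\tau^{j-1}$, so the generic value of $\alpha \mapsto \dim H^i(U, L_{\alpha\tau^j})$ on the subvariety $\tau^{-1}T$ equals $k_{j-1}$; upper semicontinuity on $\tilde{C}$ therefore yields $k_j^{\tilde{C}} \leq k_{j-1}$ for every $j$. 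Summing gives $K = \sum_j k_j^{\tilde{C}} \leq \sum_j k_{j-1} = K$, which forces termwise equality $k_j^{\tilde{C}} = k_{j-1}$; in particular $k_1^{\tilde{C}} = k_0 \geq k$, and upper semicontinuity on $\tilde{C}$ then gives $\dim H^i(U, L_{\alpha\tau}) \geq k$ for every $\alpha \in \tilde{C}$. I expect the main subtle point to be the choice of $C$ as a component of $\Sigma^i_K(V)$ rather than $\Sigma^i_k(V)$: working with the sharper bound $K$ is precisely what forces termwise equality of generic values and thus controls the $j=1$ summand relevant to $T$.
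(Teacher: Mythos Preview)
Your proof is correct and follows essentially the same approach as the paper: both use the projection formula $\dim H^i(V,f^*L_\rho)=\sum_j\dim H^i(U,L_{\rho\tau^j})$ and a semicontinuity comparison of generic cohomology dimensions between $T$ and a larger irreducible set to force equality. The only difference is packaging: the paper proves directly that $f^\star(T)$ is itself an irreducible component of $\Sigma^i_K(V)$ (by contradiction, using the same strict/non-strict inequality on summands) and then reads off that $T$ is a torsion translate of a subtorus from the covering map, whereas you pull back the component $C$ first and then run the matching argument to show $T=\tau\tilde C$.
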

\begin{proof}
Denote the order of $\tau$ in $\mb(X)$ by $r$. For any local system $L$ on $U$, 
$$f_*f^*(L)\cong\bigoplus_{j=0}^{r-1}L\otimes_\cc L_\tau^{\otimes j}. $$
According to the projection formula, $H^i(V, f^*(L))\cong H^i(U, f_*f^*(L))$. Therefore, 
\begin{equation}\label{eq}
\dim H^i(V, f^*(L))=\sum\limits_{j=0}^{r-1}\dim H^i(U, L\otimes L_\tau^{\otimes j}).
\end{equation}

Let $T$ be an irreducible component of $\Sigma^i_k(U)$ containing $\tau$, and let $\rho$ be a general point in $T$. Define $\beta_j=\dim H^i(U, L_\rho\otimes L_\tau^{\otimes j})$, for $0\leq j\leq r-1$, and $\beta=\sum_{0\leq j\leq r-1} \beta_j$. It is possible that $T\subset\Sigma^i_{k+1}(U)$, and in this case, $\beta>k$. 
\begin{claim}
$f^\star(T)$ is an irreducible component of $\Sigma^i_\beta(V)$.
\end{claim}
\begin{proof}[Proof of Claim]
By the definition of $\rho$ and $\beta$, it is clear that $f^\star(T)\subset \Sigma^i_\beta(V)$. Let $S$ be the irreducible component of $\Sigma^i_\beta(V)$ containing $f^\star(T)$. We want to show that $S=f^\star(T)$. Let $\tilde{S}$ be a connected component of $(f^\star)^{-1}(S)$ containing $T$. Since $f^\star$ is a covering map, $\tilde{S}$ is irreducible and is a covering space of $S$. 

Suppose $T\subsetneq\tilde{S}$. Take a general point $\rho'$ in $\tilde{S}$. Since $\tilde{S}$ is irreducible, and since $T$ is an irreducible component of $\Sigma^i_k(U)$, we can assume $\rho'\notin \Sigma^i_k(U)$. Therefore, $\dim H^i(U, L_{\rho'})<\dim H^i(U, L_{\rho})$. Since $\rho'$ is more general than $\rho$, $\dim H^i(U, L_{\rho'}\otimes L_\tau^{\otimes j})\leq \dim H^i(U, L_{\rho}\otimes L_\tau^{\otimes j})$, for every $1\leq j\leq r-1$. Thus, 
$$
\sum_{j=0}^{r-1}\dim H^i(U, L_{\rho '}\otimes L_\tau^{\otimes j})<\sum_{j=0}^{r-1}\dim H^i(U, L_{\rho}\otimes L_\tau^{\otimes j}).
$$
Now, equality (\ref{eq}) implies that $\dim H^i(V, f^*(L_{\rho'}))<\beta$, and hence $f^\star(\rho')$ is not contained in $\Sigma^i_\beta(V)$. This is a contradiction to the definition of $\rho'$ and $\tilde{S}$. So we have proved $T=\tilde{S}$. Therefore, $f^\star(T)=S$ is an irreducible component of $\Sigma^i_\beta(V)$. 
\end{proof}
Since $\tau\in T$, $f^\star(T)$ contains $\mathbf{1}$. By the assumption of the theorem, $f^\star(T)$ is a subtorus in $\mb(V)$. Since $f^\star$ is a covering map, obviously $T$ must be a translate of subtorus. We finished the proof of the proposition. 
\end{proof}

Theorem \ref{torsion} is a direct consequence of Theorem \ref{torsionp}, Proposition \ref{cover}, and Theorem \ref{dimca}.

\section{Appendix}
We prove the following strengthening of Lemma \ref{cstar} pointed out to us by Jiu-Kang Yu.

\begin{lemma}
Suppose $S\subset \cc^n$ is a Zariski closed subset defined over $\bar\qq$. Suppose $T\subset (\cc^*)^n$ is also a Zariski closed subset defined over $\bar\qq$ such that $\dim S=\dim T$ and $\exp(S)\subset T$. Then $T$ is a torsion translate of a subtorus.
\end{lemma}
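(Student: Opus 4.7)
The plan follows the strategy attributed to J.-K.~Yu, combining Ax's transcendence theorem for $\gg_m^n$ with the Gelfond--Schneider theorem.

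First, I reduce to the case where $T$ is irreducible (this is implicit in the conclusion; for a reducible $T$ one handles each irreducible component separately). Under this assumption, the hypothesis $\dim S=\dim T$ combined with $\exp(S)\subset T$ gives $\overline{\exp(S)}=T$. Decomposing $S$ into $\bar\qq$-irreducible components and replacing $S$ by one of maximal dimension whose exponential image is Zariski dense in $T$, I may further assume $S$ is irreducible of dimension $d=\dim T$.

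The crucial step is an application of Ax's theorem (J.~Ax, \emph{On Schanuel's conjectures}, Ann.\ of Math., 1971): in its geometric form for $\gg_m^n$, this asserts that for any irreducible algebraic subvariety $S\subset\cc^n$, the dimension of the Zariski closure $\overline{\exp(S)}\subset(\cc^*)^n$ equals the dimension of the smallest affine subspace $\sigma+L\subset\cc^n$ (with $L$ a $\qq$-rational linear subspace) that contains $S$. Combined with $\dim\overline{\exp(S)}=\dim S$, this forces $S$ itself to be such an affine subspace: $S=\sigma_0+L$ where $L$ is a $\qq$-rational linear subspace of dimension $d$, and $\sigma_0\in S\cap\bar\qq^n$ (possible since $S$ is defined over $\bar\qq$). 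Let $H:=\exp(L)\subset(\cc^*)^n$. The $\qq$-rationality of $L$ guarantees that $L\cap\zz^n$ is a lattice of full rank $d$ in $L$, so $H$ is a closed algebraic subtorus of $(\cc^*)^n$ of dimension $d$, defined over $\qq$, and $T=\exp(\sigma_0)\cdot H$.

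To promote this to a torsion translate, I consider the $\qq$-rational quotient homomorphisms $p:\cc^n\to\cc^n/L$ and $\pi:(\cc^*)^n\to(\cc^*)^n/H$, identified with $\cc^{n-d}$ and $(\cc^*)^{n-d}$ via compatible $\qq$-bases and intertwined by the standard exponential. The image $p(S)=\{\bar\sigma_0\}$ is a single $\bar\qq$-point of $\cc^{n-d}$, and $\pi(T)=\{\bar\tau\}$ is a single $\bar\qq$-point of $(\cc^*)^{n-d}$ with $\bar\tau=\exp(\bar\sigma_0)$. Applying the Gelfond--Schneider theorem coordinatewise forces $\bar\sigma_0\in\qq^{n-d}$, whence $\bar\tau$ is torsion in $(\cc^*)^{n-d}$; equivalently, $\exp(\sigma_0)$ is torsion modulo $H$, so $T=\exp(\sigma_0)\cdot H$ is a torsion translate of $H$. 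The main obstacle is Ax's theorem itself: it is this rigidity that converts the purely dimensional hypothesis $\dim S=\dim T$ into the structural statement that $S$ is a translate of a $\qq$-rational linear subspace; once this is in hand, the upgrade from an arbitrary translate to a torsion translate is essentially the coordinate-by-coordinate Gelfond--Schneider argument from the $n=1$ case of Lemma \ref{cstar}.
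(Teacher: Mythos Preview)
Your argument is correct, but it follows a genuinely different route from the one the paper records. The paper's proof (which is the one actually attributed to J.-K.~Yu in the remark preceding the appendix) does \emph{not} invoke Ax's theorem. Instead it proceeds by induction on $\codim S$: in the codimension-one case it projects to $\cc^{n-1}$, uses Gelfond--Schneider on the fibers over rational points to force $A_\rho\cap S\subset\qq^n$ for generic $\rho\in\qq^{n-1}$, and then applies Hilbert's irreducibility theorem to conclude that the defining polynomial of $S$ has degree one. The inductive step reduces to codimension one by projecting and taking closures.

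Your approach replaces the Hilbert-irreducibility step and the induction by a single appeal to the Ax--Lindemann theorem for $\gg_m^n$, which immediately forces $S$ to be a translate of a $\qq$-rational linear subspace once $\dim\overline{\exp(S)}=\dim S$; the Gelfond--Schneider argument on the quotient then finishes as you describe. This is more conceptual and avoids the induction, at the cost of importing a substantially deeper transcendence input: Ax's theorem lies much further from first principles than Hilbert irreducibility plus Gelfond--Schneider. Either argument is valid; the paper's has the advantage of being more self-contained, while yours makes the structural reason (that the ``atypical intersection'' $\dim\overline{\exp(S)}=\dim S$ already rigidifies $S$) transparent. One small correction: the strategy you describe is not the one the paper attributes to Yu.
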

\begin{proof}
First we prove the lemma for the case $\codim(S)=1$. Denote the projections to the first $n-1$ coordinates by $p_1:\cc^n\to \cc^{n-1}$ and $p_2: (\cc^*)^n\to (\cc^*)^{n-1}$. After a change of bases, we can assume that $\dim(p_1(S))=\dim(p_2(T))=n-1$. 

Let $\rho\in \qq^{n-1}\subset\cc^{n-1}$ be a point with rational coordinates. Denote $p_1^{-1}(\rho)$ and $p_2^{-1}(\exp(\rho))$ by $A_\rho$ and $B_\rho$, respectively. Since $\dim(p_2(T))=n-1$, for a general $\rho$, $B_\rho\cap T$ consists of finitely many points. Since $\exp(S)\subset T$ and $\exp(A_\rho)=B_\rho$, we have 
$$\exp(A_\rho\cap S)\subset B_\rho \cap T.$$
The projection to the last coordinate defines an isomorphism $A_\rho\cong \cc$. Similarly we have $B_\rho\cong \cc^*$. Under these isomorphisms, $A_\rho\cap S$ and $B_\rho \cap T$ are both defined over $\bar\qq$. This means any point in $A_\rho\cap S$ is a $\bar\qq$ point, and its image under the exponential is also a $\bar\qq$ point. Now, according to Gelfond-Schneider theorem, the points in $A_\rho\cap S$ must be rational points. 

We have shown that for a general $\rho\in \cc^{n-1}$, $A_\rho\cap S$ consists of only points with rational coordinates. Suppose $S$ is defined by a polynomial $f(x_1, \ldots, x_n)=0$ with coefficients in $\bar\qq$. Since $S$ is irreducible, $f$ is irreducible over $\bar\qq$. Let $\bar{f}$ be the irreducible polynomial defined over $\qq$ that has $f$ as a factor over $\bar\qq$. Now, for a general $\rho\in \qq^{n-1}$, the intersection of the zero locus of $\bar{f}$ and $A_\rho$ must contain at least one point with rational coordinates. This means that by plugging in a general $(n-1)$-tuple of rational numbers into the first $n-1$ variables, $\bar{f}(x_1, \ldots, x_n)=0$ has at least one solution $x_n\in\qq$. However, by Hilbert irreducibility theorem, after plugging in such a general $(n-1)$-tuple of rational numbers, $\bar{f}$ is irreducible over $\qq$ as a polynomial in $x_n$. Therefore, $\bar{f}$ must be of degree one in $x_n$. Since the coordinates can be chosen generically, $\bar{f}$ itself is of degree one. Now, it is obvious that $S$ is a translate of a linear subspace defined over $\qq$, and $T$ is a translate of a subtorus by a torsion point. 

Next, we use induction on the codimension of $S$. Suppose $\codim(S)\geq 2$. We define the projections $p_1:\cc^n\to \cc^{n-1}$ and $p_2: (\cc^*)^n\to (\cc^*)^{n-1}$ as before. After a change of bases, we can assume that $\dim(p_1(S))=\dim(S)$. Then, $\dim(p_2(T))=\dim(T)=\dim(S)$. 

Let $S'=\overline{p_1(S)}$ and  $T'=\overline{p_2(T)}$ be the closures in the usual Euclidean topology. Since $p_1(S)$ and $p_2(T)$ are Zariski constructible sets, both the Zariski topology and the usual topology define the same closure. Hence $S'$ and $T'$ are defined over $\bar\qq$. Since the exponential map is continuous in the usual topology, 
$$\exp\left(\overline{p_1(S)}\right)\subset \overline{\exp(p_1(S))}.$$
Since $\exp(S)\subset T$ and $\exp(p_1(S))=p_2(\exp(S))$, we have
$$\exp(S')=\exp\left(\overline{p_1(S)}\right)\subset \overline{\exp(p_1(S))}=\overline{p_2(\exp(S))}\subset \overline{p_2(T)}=T'.$$
Using the induction hypothesis on the pair $S'\subset \cc^{n-1}$ and $T'\subset (\cc^*)^{n-1}$, we conclude that $T'$ is a torsion translate of a subtorus. Now, by choosing a torsion point of $T'$ as origin, we can identify $T'$ as $(\cc^*)^{\dim(T)}$. Taking the connected component of $\exp ^{-1}(p_2^{-1}(T'))$ containing $S$ and choosing a compatible origin on this connected component, the problem is reduced to a codimension one case, which is already solved. 
\end{proof}

\bigskip
\end{document}